\author{Biao Wang}
\date{}
\newtheorem{thm}{Theorem}[section]
\newtheorem*{thm*}{Theorem}
\newtheorem*{cor*}{Corollary}
\newtheorem{lem}[thm]{Lemma}
\newtheorem*{lem*}{Lemma}
\newtheorem*{prop*}{Proposition}
\theoremstyle{definition}
\theoremstyle{remark}
\newtheorem{rmk}[thm]{Remark}
\newcommand{\N}{\mathbb{N}}
\renewcommand{\P}{\mathbb{P}}
\newcommand{\C}{\mathbb{C}}
\renewcommand{\leq}{\leqslant}
\renewcommand{\geq}{\geqslant}
\newcommand{\rr}{\rightarrow}
\newcommand{\of}[1]{\left(#1\right)}
\newcommand{\smat}[1]{\begin{smallmatrix}#1\end{smallmatrix}}
\begin{document}
\title{A generalization on the average ratio of the smallest and largest prime divisor of $n$}
\maketitle

\begin{abstract}
	In 1982, Erd$\ddot{\text{o}}$s and van Lint showed an estimate for the average of  the ratio of the smallest and largest prime divisor of $n$. In this note, we apply C.H. Jia's method to give an estimate for the average of positive integer power of the ratio.
\end{abstract}

\section{Introduction}
Let $n>1$ be an integer. Denote by $p(n)$ the smallest prime divisor of $n$ and $P(n)$ the largest prime divisor of $n$. Let $S(x)$ be the average of the ratio of the smallest and largest prime divisor of $n$:
$$S(x)=\sum_{n\leq x}\frac{p(n)}{P(n)}.$$ 
In 1982, Erd$\ddot{\text{o}}$s and van Lint \cite{erdos82} proved that
$$S(x)=\frac{x}{\log x}+\frac{3x}{\log^2 x}+o\of{\frac{x}{(\log x)^2}}.$$
In 1987, C.H. Jia \cite{jia87} proved that
$$S(x)=\frac{x}{\log x}+\frac{3x}{\log^2 x}+\frac{15x}{\log^3 x}+o\of{\frac{x}{(\log x)^3}}.$$

\quad In this note, we consider a generalization of the average $S(x)$ and find an estimate by applying C.H. Jia's method in \cite{jia87}. Let  $\omega(n)$ be the number of distinct prime divisors of $n$. Suppose $\lambda:\N \rr \C$ is a bounded arithmetic function. For a positive real number $\alpha>0$, let $S_{\lambda,\alpha}(x)$ be the weighted average of the $\alpha$-th power of the ratio of the smallest and largest prime divisor of $n$ with respect to $\lambda$ as follows
$$S_{\lambda,\alpha}(x)=\sum_{n\leq x}\lambda(\omega(n))\left(\frac{p(n)}{P(n)}\right)^\alpha.$$
Note that when $\lambda\equiv1$ and $\alpha=1$, $S_{\lambda,\alpha}(x)$ turns to be
$S(x)=\sum\limits_{n\leq x}\frac{p(n)}{P(n)}.$

\begin{thm}\label{mainthm1} 
	Let $S_{\lambda,\alpha}(x)$ be defined as above  and  $\pi(x)=\sum_{p\leq x}1$ be the prime counting function.  Then for $\alpha>\frac45$ we have
	\begin{equation}
		S_{\lambda,\alpha}(x)=(\lambda(1)+O(1))\pi(x)
	\end{equation}
and
\begin{equation}\label{maththmeq2}
\begin{split}
S_{\lambda,\alpha}(x)&=\lambda(1)\frac{x}{\log x}+\left(\frac{2}{{\alpha}}\lambda(2)+\lambda(1)\right)\frac{x}{\log^2 x}\\
&\quad+\left(\frac9{{\alpha}^2}\lambda(3)+\frac4{\alpha}\lambda(2)+2\lambda(1)\right)\frac{x}{\log^3 x}+O\of{\frac{x}{(\log x)^4}}.
\end{split}
\end{equation}
\end{thm}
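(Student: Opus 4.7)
The plan is to adapt the iterative peeling method of Jia \cite{jia87} to the weighted $\alpha$-th power setting. First I would split by $\omega(n)$: primes $n$ give $p(n)=P(n)$ and $\omega(n)=1$, contributing exactly $\lambda(1)\pi(x)$, while prime powers $n=p^k$ with $k\ge 2$ also satisfy $p(n)/P(n)=1$ but number only $O(\sqrt{x})$ in total. For the remaining composite $n$ with $\omega(n)\ge 2$, set $p=p(n)$, $P=P(n)$, and $m=n/(pP)$, so every prime factor of $m$ lies in $[p,P]$ and $m\le x/(pP)$. Swapping order of summation,
\begin{equation*}
T(x):=\sum_{\substack{n\le x\\\omega(n)\ge 2}}\lambda(\omega(n))\left(\frac{p(n)}{P(n)}\right)^{\alpha}=\sum_{p<P}\frac{p^{\alpha}}{P^{\alpha}}\sum_{\substack{m\le x/(pP)\\ q\mid m\,\Rightarrow\,p\le q\le P}}\lambda(\omega(pmP)).
\end{equation*}
Equation (1) then reduces to $T(x)=O(x/\log x)$, which follows a fortiori from (2).

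To prove (2), I would expand the inner sum by the first few cases of $m$. The base case $m=1$ gives the two-prime contribution $\Sigma_2=\lambda(2)\sum_{p<P,\,pP\le x}(p/P)^{\alpha}$, and $m=q$ a single prime in $(p,P)$ gives the three-prime contribution $\Sigma_3=\lambda(3)\sum_{p<q<P,\,pqP\le x}(p/P)^{\alpha}$. I would then evaluate $\Sigma_2$ by two applications of Abel summation against the prime number theorem in the classical form $\pi(t)=\operatorname{li}(t)+O(t\,e^{-c\sqrt{\log t}})$, splitting the $P$-range at $\sqrt{x}$ and expanding $1/\log t$ to third order in $1/\log x$; a careful calculation yields $\Sigma_2=(2/\alpha)\lambda(2)\,x/\log^{2}x+(4/\alpha)\lambda(2)\,x/\log^{3}x+O(x/\log^{4}x)$. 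An analogous but slightly longer Abel-summation argument for $\Sigma_3$ yields $(9/\alpha^{2})\lambda(3)\,x/\log^{3}x+O(x/\log^{4}x)$. The three $\lambda(1)$-coefficients in \eqref{maththmeq2} then emerge from the expansion $\pi(x)=x/\log x+x/\log^{2}x+2x/\log^{3}x+O(x/\log^{4}x)$ applied to the prime contribution $\lambda(1)\pi(x)$.

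The main obstacle is to show that the omitted pieces all combine to $O(x/\log^{4}x)$. These split into (a) non-squarefree $n$ inside the two- and three-prime ranges (such as $n=p^{2}P$, $n=pP^{2}$, or $n=pq^{2}P$), each of which carries an extra factor $p^{-1}$ or $q^{-1}$ and is therefore controlled by elementary estimates; and (b) the ``deep tail'' $\omega(m)\ge 2$. For (b), one bounds the count of $m\le y$ with all prime factors in $[p,P]$ via Mertens' product $\prod_{p\le q\le P}(1-1/q)$, reducing everything to double sums of the form $x\sum_{p<P,\,pP\le x}(p/P)^{\alpha}(pP)^{-1}(\log(x/(pP)))^{-k}$ for $k=2,3$. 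After partial summation these are $O(x/\log^{4}x)$ exactly when $\alpha>4/5$, which is the critical threshold stemming from the interplay between the $P^{-\alpha}$ decay factor and the density of $n$ with $P(n)\asymp x^{4/5}$.
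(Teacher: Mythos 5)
Your main-term computation follows essentially the same route as the paper: split by the number of distinct prime factors, and evaluate the two- and three-prime sums by nested partial summation against the prime number theorem (the paper's Lemma \ref{mainlem}, quoted from Jia). Those parts are fine in outline, as is the observation that the first asymptotic follows from the second. The genuine gap is in your error analysis, i.e.\ in showing that everything else is $O(x/\log^4x)$, which is precisely where the hypothesis $\alpha>\frac45$ must enter. Your proposed bound for the ``deep tail'' $\omega(m)\ge2$ via the Mertens product $\prod_{q<p}(1-1/q)$ saves only a single factor $1/\log p$, so the resulting double sums over $(p,P)$ come out of size $O(x/\log^{2}x)$ or $O(x/\log^{3}x)$, not $O(x/\log^{4}x)$. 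A sieve-density bound of this shape cannot detect the key structural fact that the weight $(p/P)^\alpha$ forces \emph{all} prime factors of $n$ to be of comparable size, which is what actually confines the $\omega(n)=j$ contribution to $O_j(x/\log^{j}x)$; and even granting that, one still needs uniformity in $j$ to sum the tail.

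The paper's mechanism is different and is where the threshold really comes from. It treats $\omega(n)=4,5$ by the same nested analysis as $\Sigma^{(2)},\Sigma^{(3)}$, and for $\omega(n)\ge6$ it first discards the $n$ with $P(n)\ge p(n)(\log x)^{4/\alpha}$ (there $(p(n)/P(n))^\alpha\le(\log x)^{-4}$, so that contribution is trivially acceptable), and on the complementary narrow range uses the inequality $(p(n)/P(n))^\alpha\le(\log x)^{4/\alpha-4}\,p(n)/P(n)$ together with the fact that each of the five inner prime sums then runs over an interval of multiplicative length $(\log x)^{4/\alpha}$ and contributes only $O(\log\log x/\log p_6)$, with $p_6$ larger than $\exp(\log x/\log\log x)$ after removing smooth $n$ (Lemma \ref{smoothnum}). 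This gives $O\bigl(x(\log\log x)^{11}(\log x)^{4/\alpha-9}\bigr)$, and the condition $9-4/\alpha>4$ is exactly $\alpha>\frac45$. Your stated source of the threshold (``the density of $n$ with $P(n)\asymp x^{4/5}$'') is not the actual mechanism, and without the $(\log x)^{4/\alpha}$-window device your proposal contains no argument that the tail is $O(x/\log^4x)$ for $\alpha$ near $\frac45$. A smaller instance of the same omission: reducing to squarefree $n$ for $0<\alpha<1$ (the paper's Lemma \ref{mainlem2}) already requires this window trick, not just the extra factor $p^{-1}$ you invoke.
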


\begin{rmk}
	Using the method of the proof, in principle one can find out the coefficient of the term $\frac{x}{\log^4x}$ and so on. But the computation is very complicated.
\end{rmk}

\section{Some Lemmas}
Before going to the proof of Theorem \ref{mainthm1}, we cite/improve some lemmas below.
	\begin{lem}[\cite{bom62}]\label{sum1lem}
		For any constant $A>0$, we have
		$$\pi(x)=\sum_{p\leq x}1=\int_2^x\frac{dt}{\log t}+O\of{\frac{x}{(\log x)^A}}.$$
	\end{lem}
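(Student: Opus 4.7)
The plan is to derive this estimate as a consequence of the prime number theorem equipped with a de la Vallée Poussin type error term, and then to pass from the weighted counting function $\theta(x)=\sum_{p\leq x}\log p$ to $\pi(x)$ via Abel summation, comparing the result to the logarithmic integral.

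First I would invoke the classical zero-free region for the Riemann zeta function: there exists an absolute constant $c>0$ such that $\zeta(\sigma+it)\neq 0$ whenever $\sigma\geq 1-c/\log(|t|+2)$. Combined with standard bounds on $-\zeta'(s)/\zeta(s)$ in this region, a truncated Perron formula applied to $-\zeta'/\zeta$ followed by a contour shift past the line $\sigma=1$ yields the effective prime number theorem
\begin{equation*}
\psi(x)=\sum_{n\leq x}\Lambda(n)=x+O\of{x\exp(-c\sqrt{\log x})}.
\end{equation*}
Since $\exp(-c\sqrt{\log x})$ decays faster than any negative power of $\log x$, this gives $\psi(x)=x+O(x/(\log x)^A)$ for every fixed $A>0$ (with implicit constant depending on $A$). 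The elementary bound $\psi(x)-\theta(x)\ll\sqrt{x}\log x$, which collects the contribution of prime powers $p^k$ with $k\geq 2$, transfers the same asymptotic to $\theta(x)$.

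Next, Abel summation with $a_n=\log p$ for $n=p$ prime and $a_n=0$ otherwise, together with $f(t)=1/\log t$, produces
\begin{equation*}
\pi(x)=\frac{\theta(x)}{\log x}+\int_2^x\frac{\theta(t)}{t(\log t)^2}\,dt,
\end{equation*}
while integration by parts gives $\int_2^x dt/\log t=x/\log x+\int_2^x dt/(\log t)^2+O(1)$. Subtracting these two identities reduces $\pi(x)-\int_2^x dt/\log t$ to
\begin{equation*}
\frac{\theta(x)-x}{\log x}+\int_2^x\frac{\theta(t)-t}{t(\log t)^2}\,dt+O(1),
\end{equation*}
which is $O(x/(\log x)^A)$ once one substitutes $\theta(t)-t\ll t/(\log t)^{A+1}$ and estimates the resulting integral by splitting at, say, $t=\sqrt{x}$.

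The main technical obstacle is the initial contour shift, which relies on the zero-free region for $\zeta$ together with quantitative bounds on its logarithmic derivative close to $\sigma=1$ — a classical but delicate piece of complex analysis. Once the effective PNT for $\psi$ is in hand, the reduction to the stated estimate for $\pi$ is a routine application of partial summation, and the freedom to choose $A$ arbitrarily large is already built into the $\exp(-c\sqrt{\log x})$ savings.
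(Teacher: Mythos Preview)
Your outline is correct and follows the classical route to the prime number theorem with de la Vall\'ee Poussin remainder, passing from $\psi$ to $\theta$ to $\pi$ by partial summation. The paper itself supplies no proof of this lemma; it simply quotes the statement from \cite{bom62}, so there is no in-paper argument to compare against.
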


\begin{lem}[Lemma 3, \cite{jia87}]\label{mainlem}
	Suppose $c$ is a constant, $f(x)$ is a function satisfying $f(x)=O\of{x^c}, f'(x)=O\of{x^{c-1}}$, $x\geq1$. Then for any constant $A>0$ and $\frac32<y\leq x$, we have
	\begin{equation}\label{maineq1}
	\sum_{y<p\leq x}f(p)=\int_y^x\frac{f(t)}{\log t}dt+O\of{\frac{x^{c+1}}{(\log x)^A}}, c>-1,
	\end{equation}
	\begin{equation}\label{maineq1-2}
	\sum_{y<p\leq x}f(p)=\int_y^x\frac{f(t)}{\log t}dt+O\of{\frac{y^{c+1}}{(\log y)^A}}, c<-1.
	\end{equation}
\end{lem}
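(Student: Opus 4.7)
The plan is to apply partial (Abel) summation against $\pi(t)$, substitute the asymptotic from Lemma \ref{sum1lem}, and then recognize the main term via integration by parts; the two stated cases $c>-1$ and $c<-1$ will then differ only in which endpoint dominates the resulting error integral.

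First, I may assume without loss of generality that $y\geq 2$ (the range $3/2<y<2$ differs by at most one term $f(2)$, which is $O(1)$ and absorbable). Abel summation gives
\[
\sum_{y<p\leq x} f(p) = \pi(x)f(x)-\pi(y)f(y)-\int_y^x \pi(t) f'(t)\,dt.
\]
Fix an auxiliary parameter $A'\geq A$ and, using Lemma \ref{sum1lem}, write $\pi(t)=\mathrm{Li}(t)+R(t)$ with $\mathrm{Li}(t)=\int_2^t du/\log u$ and $|R(t)|\ll t/(\log t)^{A'}$. Substituting this decomposition and performing integration by parts on the $\mathrm{Li}$-part (using $\mathrm{Li}'(t)=1/\log t$) yields
\[
\mathrm{Li}(x)f(x)-\mathrm{Li}(y)f(y)-\int_y^x \mathrm{Li}(t) f'(t)\,dt = \int_y^x \frac{f(t)}{\log t}\,dt,
\]
which is exactly the claimed main term. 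Everything then reduces to bounding the three error contributions $R(x)f(x)$, $R(y)f(y)$, and $\int_y^x R(t) f'(t)\,dt$. Using $f(x)\ll x^c$ and $f'(t)\ll t^{c-1}$, these are majorized by $x^{c+1}/(\log x)^{A'}$, $y^{c+1}/(\log y)^{A'}$, and $\int_y^x t^c(\log t)^{-A'}\,dt$, respectively.

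The main step is to control the error integral $I(y,x)=\int_y^x t^c(\log t)^{-A'}\,dt$. When $c>-1$, I would split at $\sqrt x$: on $[\sqrt x,x]$ the factor $(\log t)^{-A'}$ is $\ll (\log x)^{-A'}$ and the contribution is $\ll x^{c+1}/(\log x)^{A'}$, while on $[y,\sqrt x]$ the bare bound $\int t^c\,dt \ll x^{(c+1)/2}$ is absorbed into $x^{c+1}/(\log x)^A$ for $x$ large. A parallel argument for $c<-1$, with the split taken near $y$, yields $I(y,x)\ll y^{c+1}/(\log y)^{A'}$, since there the integrand decays and the lower endpoint dominates. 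One then verifies that the boundary errors $R(x)f(x)$ and $R(y)f(y)$ also fit into the target bound: for $c>-1$ this is a short case split on whether $y\leq\sqrt x$, and for $c<-1$ it is immediate from $x^{c+1}\leq y^{c+1}$ and $(\log x)^{-1}\leq (\log y)^{-1}$. Taking $A'=A$ at the end closes the argument.

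The main obstacle is purely the bookkeeping, namely tracking which endpoint controls each of the three error contributions so that a single clean endpoint estimate emerges for each case; the underlying machinery is just partial summation plus the PNT supplied by Lemma \ref{sum1lem}.
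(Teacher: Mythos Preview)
The paper does not supply its own proof of this lemma: it is quoted verbatim as Lemma~3 of \cite{jia87} and used as a black box. So there is nothing to compare against in the present paper.

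Your argument is correct and is exactly the standard route one would expect: Abel summation against $\pi(t)$, substitution of $\pi(t)=\mathrm{Li}(t)+R(t)$ from Lemma~\ref{sum1lem}, integration by parts to recover the main term $\int_y^x f(t)/\log t\,dt$, and then endpoint-dominated estimates for the three error pieces. The only places requiring care are the ones you flagged: for $c>-1$ the boundary term $R(y)f(y)$ and the lower portion of the error integral need the case split $y\lessgtr\sqrt{x}$ (using that $x^{(c+1)/2}$ beats any power of $\log x$), and for $c<-1$ the analogous split near $y$ handles the integral while the monotonicity $x^{c+1}\leq y^{c+1}$, $(\log x)^{-1}\leq(\log y)^{-1}$ handles the boundary terms. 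The auxiliary exponent $A'$ is harmless; as you note, $A'=A$ already suffices. Nothing is missing.
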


\begin{lem}[Lemma 4, \cite{jia87}]\label{smoothnum}
	Let $\Psi(x,y)$ be the number of positive integers in $[1,x]$ whose prime factors are $\leq y$. If
	$$y=\exp\left(\frac{\log x}{\log\log x}\right),$$
	then for any $A>0$, we have
	$$\Psi(x,y)=O\left(\frac{x}{(\log x)^A}\right).$$
\end{lem}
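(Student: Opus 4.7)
The plan is to apply Rankin's trick, the classical elementary method for bounding the count of $y$-smooth integers. For a parameter $\sigma\in(1/2,1)$ to be chosen later, the inequality $(x/n)^\sigma\geq 1$ for $n\leq x$ yields
\begin{equation*}
\Psi(x,y)\leq\sum_{\substack{n\geq 1\\ P(n)\leq y}}\left(\frac{x}{n}\right)^\sigma = x^\sigma\prod_{p\leq y}\left(1-p^{-\sigma}\right)^{-1},
\end{equation*}
since the Dirichlet series factors as an Euler product over primes $\leq y$. Expanding $-\log(1-p^{-\sigma})=\sum_{k\geq 1}p^{-k\sigma}/k$ and noting that the contribution from $k\geq 2$ is uniformly $O(1)$ when $\sigma>1/2$ gives
\begin{equation*}
\log\Psi(x,y)\leq\sigma\log x+\sum_{p\leq y}p^{-\sigma}+O(1).
\end{equation*}

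Next I would estimate the prime sum by partial summation from Lemma \ref{sum1lem}. For $\sigma$ close to $1$ with $(1-\sigma)\log y$ bounded, a direct computation shows that the dominant behaviour is
\begin{equation*}
\sum_{p\leq y}p^{-\sigma}\sim\frac{y^{1-\sigma}}{(1-\sigma)\log y}.
\end{equation*}
Specializing to $y=\exp(\log x/\log\log x)$ and choosing
\begin{equation*}
\sigma=1-\frac{A+1}{\log y}=1-\frac{(A+1)\log\log x}{\log x},
\end{equation*}
I get $(1-\sigma)\log x=(A+1)\log\log x$ and $y^{1-\sigma}=e^{A+1}$, so the prime sum collapses to $O_A(1)$. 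Assembling the pieces,
\begin{equation*}
\log\Psi(x,y)\leq\log x-(A+1)\log\log x+O_A(1),
\end{equation*}
and exponentiating yields $\Psi(x,y)=O(x/(\log x)^{A+1})$, which is even stronger than claimed.

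The main technical point requiring care is the uniformity of the asymptotic $\sum_{p\leq y}p^{-\sigma}\sim y^{1-\sigma}/((1-\sigma)\log y)$ as $\sigma\to 1^-$ jointly with $y\to\infty$. Because the mass of $\int_2^y t^{-\sigma}(\log t)^{-1}\,dt$ concentrates near the upper endpoint when $\sigma$ is close to $1$, I would split the range of integration at, say, $t=\sqrt{y}$ and apply Lemma \ref{sum1lem} on each piece; the lower piece contributes $O(y^{1/2})$ negligibly, while the upper piece yields the claimed asymptotic with an error controlled by the $(\log y)^{-A}$ factor from the prime counting estimate. This separation of scales provides the uniform control needed for the Rankin optimization above.
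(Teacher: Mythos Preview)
The paper does not supply its own proof of this lemma; it is quoted verbatim from Jia~\cite{jia87}. Your Rankin-trick approach is the standard route to such bounds and does ultimately succeed, but the argument as written contains a genuine error in the estimate of the prime sum.

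Your claimed asymptotic $\sum_{p\leq y}p^{-\sigma}\sim y^{1-\sigma}/((1-\sigma)\log y)$ is valid only when $(1-\sigma)\log y\to\infty$; in the regime you are actually using, where $(1-\sigma)\log y=A+1$ is bounded, it is false. Indeed, since $\sigma<1$ one has trivially $\sum_{p\leq y}p^{-\sigma}\geq\sum_{p\leq y}p^{-1}\sim\log\log y\to\infty$, so the sum cannot be $O_A(1)$. More precisely, the substitution $u=\log t$ turns $\int_2^y t^{-\sigma}(\log t)^{-1}\,dt$ into $\int_{\log 2/\log y}^{1}e^{(A+1)v}v^{-1}\,dv$, whose divergence as $y\to\infty$ comes entirely from the lower endpoint and equals $\log\log y+O_A(1)$. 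In your splitting at $t=\sqrt{y}$, it is exactly the \emph{lower} piece $\int_2^{\sqrt{y}}$ that carries this $\log\log y$; the crude bound $O(y^{1/2})$ is correct but useless, and calling it ``negligible'' relative to an $O_A(1)$ main term is the mistake.

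The repair is immediate: with the corrected estimate $\sum_{p\leq y}p^{-\sigma}=\log\log y+O_A(1)\leq\log\log x+O_A(1)$, one gets
\[
\log\Psi(x,y)\leq\log x-(1-\sigma)\log x+\log\log x+O_A(1),
\]
so choosing $\sigma=1-(A+1)/\log y$ now yields only $\Psi(x,y)\ll x/(\log x)^{A}$, not $x/(\log x)^{A+1}$. Taking instead $\sigma=1-(A+2)/\log y$ absorbs the extra $\log\log x$ and gives the lemma (or the stronger exponent, as you wished).
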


\begin{lem}\label{mainlem2}
	For integer $i\geq 1$, let
	$\displaystyle{\Sigma^{(i)}=\sum_{ n\leq x,\mu(n)\neq0, \omega(n)=i}\left(\frac{p(n)}{P(n)}\right)^\alpha},$ where $\mu(n)$ is the M$\ddot{\text{o}}$bius function.
	Then for any constant $A>0$, we have
	\begin{equation}\label{maineq2}
	S_{\lambda,\alpha}(x)=\sum_{i=1}^\infty\lambda(i)\Sigma^{(i)} +O\of{\frac{x}{(\log x)^{ A}}}.
	\end{equation}
\end{lem}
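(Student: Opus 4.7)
The plan is to identify $S_{\lambda,\alpha}(x) - \sum_i \lambda(i) \Sigma^{(i)}$ with the contribution from non-squarefree $n \leq x$ and to show this remainder is $O(x/(\log x)^A)$. Grouping the definition of $S_{\lambda,\alpha}(x)$ by $\omega(n)$ and subtracting off the squarefree contributions gives
\[ S_{\lambda,\alpha}(x) - \sum_{i=1}^{\infty} \lambda(i)\, \Sigma^{(i)} = \sum_{\substack{n \leq x \\ \mu(n) = 0}} \lambda(\omega(n)) \left(\frac{p(n)}{P(n)}\right)^\alpha, \]
and since $\lambda$ is bounded it suffices to establish
\[ \Sigma_{bad} := \sum_{\substack{n \leq x \\ \mu(n) = 0}} \left(\frac{p(n)}{P(n)}\right)^\alpha = O\of{\frac{x}{(\log x)^A}}. \]

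Setting $y = \exp(\log x/\log\log x)$ as in Lemma \ref{smoothnum}, I will decompose $\Sigma_{bad}$ according to the size and exponent of $P(n)$ into three ranges: (I) $n$ is $y$-smooth, i.e., $P(n) \leq y$; (II) $P(n) > y$ and $P(n)^2 \mid n$; and (III) $P(n) > y$ but $P(n)^2 \nmid n$, so that $n = qm$ with $q = P(n)$, $(q,m) = 1$, $P(m) < q$, and $m$ non-squarefree. Part (I) is bounded by $\Psi(x,y) = O(x/(\log x)^A)$ via Lemma \ref{smoothnum}. Part (II) has at most $\sum_{y < q \leq \sqrt{x}} x/q^2 = O(x/y)$ elements, and for our choice of $y$ this is $O(x/(\log x)^A)$.

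The substantive work is part (III). The key technical estimate I will establish is
\[ \#\set{m \leq z : p(m) = p,\ \mu(m) = 0} \leq \frac{Cz}{p^2}, \]
proved by splitting on the prime $r$ with $r^2 \mid m$: if $r = p$ then $m = p^2 k$ with count $\leq z/p^2$; if $r > p$, then using $p \mid m$ (as $p = p(m)$) together with $p \neq r$ gives $pr^2 \mid m$, contributing $\sum_{r > p} z/(pr^2) = O(z/p^2)$. With this estimate in hand I split part (III) further according to whether $p(m) \leq y$ or $p(m) > y$. In the first sub-case, the contribution is bounded by
\[ Cx \sum_{p \leq y,\, p\,\mathrm{prime}} p^{\alpha-2} \sum_{y < q \leq x,\, q\,\mathrm{prime}} q^{-\alpha-1}, \]
which a direct partial-summation argument using the prime number theorem shows is $O(x/(\log x)^A)$ for every $\alpha > 0$, since $y^\alpha$ dominates any power of $\log x$. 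In the second sub-case, $p(m) > y$ combined with $\mu(m) = 0$ forces some $r > y$ with $r^2 \mid m$, so the count of admissible $m$ is $O((x/q)/y)$, and the crude bound $p(m)^\alpha \leq q^\alpha$ yields $O((x/y) \log\log x)$, which is also $O(x/(\log x)^A)$.

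The main obstacle is that the condition $\mu(n) = 0$ alone yields no useful saving: non-squarefree integers have positive density in $\N$, so merely counting them only gives $\Sigma_{bad} = O(x)$. The saving must come from pairing $\mu(n) = 0$ with the ratio $(p(n)/P(n))^\alpha$, and the decisive point is the $1/p^2$ in the key estimate: a squared prime $r^2$ appearing in $m$ either coincides with $p(m)^2$, forcing $p(m) \leq \sqrt{m}$, or else contributes an extra factor of $p(m)$ in the denominator via $p(m) r^2 \mid m$. Combining the bounds for parts (I), (II), and (III) then gives $\Sigma_{bad} = O(x/(\log x)^A)$ for any $A > 0$.
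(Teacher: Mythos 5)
Your proof is correct, but it takes a genuinely different route from the paper. The paper's proof does not re-derive the bound on the non-squarefree contribution from scratch: it splits on $\alpha\geq 1$ versus $0<\alpha<1$, in the first case using the monotonicity $\left(\frac{p(n)}{P(n)}\right)^\alpha\leq\frac{p(n)}{P(n)}$ to quote the estimate $\sum_{n\leq x,\,\mu(n)=0}\frac{p(n)}{P(n)}=O\left(x(\log x)^{-A}\right)$ from the proof of Lemma 5 of Jia's paper, and in the second case splitting on whether $P(n)>p(n)(\log x)^{A/\alpha}$ (where the ratio itself supplies the saving $(\log x)^{-A}$) or not (where the count of such $n$ is again taken from Jia's Lemma 5). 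Your argument is instead self-contained and uniform in $\alpha>0$: the decomposition into $y$-smooth $n$, $n$ with $P(n)^2\mid n$, and $n=qm$ with $m$ non-squarefree, together with your key estimate $\#\{m\leq z: p(m)=p,\ \mu(m)=0\}\ll z/p^2$ (which is essentially the content of Jia's Lemma 5 argument, here reproved), correctly delivers $O\left(x(\log x)^{-A}\right)$ in all ranges; I checked that the sum $x\sum_{p\leq y}p^{\alpha-2}\sum_{y<q\leq x}q^{-\alpha-1}$ is indeed $O(x/y^{\min(\alpha,1)})$ up to logarithms for every $\alpha>0$, which beats any power of $\log x$ for $y=\exp(\log x/\log\log x)$. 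What your approach buys is independence from the external reference and no case split on $\alpha$; what the paper's approach buys is brevity, at the cost of leaning on a proof contained in a source that is not reproduced in the note.
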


\begin{proof} Since $\lambda$ is bounded, we have $\sum\limits_{ n\leq x,\mu(n)=0}\lambda(\omega(n))\left(\frac{p(n)}{P(n)}\right)^\alpha=O\left(\sum\limits_{ n\leq x,\mu(n)=0}\left(\frac{p(n)}{P(n)}\right)^\alpha\right)$. If $\alpha\geq1$, then $\left(\frac{p(n)}{P(n)}\right)^\alpha\leq \frac{p(n)}{P(n)}$. By the proof of Lemma 5 in  \cite{jia87}, we have $\sum\limits_{ n\leq x,\mu(n)=0}\frac{p(n)}{P(n)}=O\left(\frac{x}{(\log x)^{ A}}\right)$. So $\sum\limits_{ n\leq x,\mu(n)=0}\left(\frac{p(n)}{P(n)}\right)^\alpha=O\left(\frac{x}{(\log x)^{ A}}\right)$.

	 If $0<\alpha<1$, then $\left(\frac{p(n)}{P(n)}\right)^\alpha\leq 1$, we have 
	\begin{align*}
	\sum_{ n\leq x,\mu(n)=0}\left(\frac{p(n)}{P(n)}\right)^\alpha&=\sum_{ \smat{n\leq x,\mu(n)=0\\P(n)>p(n)(\log x)^{\frac{A}{\alpha}}}}\left(\frac{p(n)}{P(n)}\right)^\alpha+\sum_{ \smat{n\leq x,\mu(n)=0\\P(n)<p(n)(\log x)^{\frac{A}{\alpha}}}}\left(\frac{p(n)}{P(n)}\right)^\alpha\\
	&=O\left( \frac{x}{(\log x)^{ A}}\right)+O\Big(\sum_{ \smat{n\leq x,\mu(n)=0\\P(n)<p(n)(\log x)^{\frac{A}{\alpha}}}}1\,\,\Big)\\
	&=O\left( \frac{x}{(\log x)^{ A}}\right)+ O\left( \frac{x}{(\log x)^{ \frac{A}{\alpha}}}\right)=O\left( \frac{x}{(\log x)^{ A}}\right).
	\end{align*}
	Here the upper bound for the second $O$-term comes from the proof of Lemma 5 in  \cite{jia87}. 
	
	Therefore, for any constant $A>0$, 
	\begin{equation*}\label{maineq2-2}
	S_{\lambda,\alpha}(x)=\sum_{2\leq n\leq x,\mu(n)\neq0}\lambda(\omega(n))\left(\frac{p(n)}{P(n)}\right)^\alpha+O\of{\frac{x}{(\log x)^{A}}}
	\end{equation*}
and the lemma follows immediately.
\end{proof}

\quad By Lemma \ref{mainlem2},  to estimate $S_{\lambda,\alpha}(x)$, it suffices to estimate $\Sigma^{(i)}$ for each $i$. We shall use Lemma \ref{mainlem} repeatedly to get the estimates.

\section{Proof of Theorem \ref{mainthm1}}
	\subsection{Computation of \texorpdfstring{$\Sigma^{(1)}$}{}}
		\quad Clearly, $\Sigma^{(1)}=\pi(x)$. By Lemma \ref{sum1lem}, we get that
		\begin{equation}\label{sum1eq1}
		\Sigma^{(1)}=\pi(x)=\frac{x}{\log x}+\frac{x}{\log^2 x}+\frac{2x}{\log^3 x}+O\of{\frac{x}{\log^4 x}}.
		\end{equation}

	\subsection{Computation of \texorpdfstring{$\Sigma^{(2)}$}{}}
		\begin{align}
		\Sigma^{(2)}=\sum_{\smat{p_1p_2\leq x\\p_1<p_2}}\frac{p_1^{\alpha}}{p_2^{\alpha}}&=\sum_{p_2\leq\sqrt{x}}\frac1{p_2^{\alpha}}\sum_{p_1<p_2}p_1^{\alpha}+\sum_{\sqrt{x}<p_2\leq \sqrt{x}(\log x)^{\frac4\alpha} }\frac1{p_2^{\alpha}}\sum_{p_1\leq\frac{x}{p_2}}p_1^{\alpha}+O\of{\frac{x}{\log^4 x}} \nonumber\\
		&=I_1+I_2+O\of{\frac{x}{\log^4 x}}	
		\end{align}
\quad By Lemma \ref{mainlem}, we get
$$\sum_{p_1<p_2}p_1^{\alpha}=\int_{2}^{p_2}\frac{t^\alpha}{\log t}dt+O\of{\frac{p_2^{{\alpha}+1}}{(\log p_2)^3}}.$$
It follows that
		\begin{align}
		I_1&=\sum_{p_2\leq\sqrt{x}}\frac1{p_2^{\alpha}}\left(\frac{p_2^{{\alpha}+1}}{({\alpha}+1)\log p_2}+\frac{p_2^{{\alpha}+1}}{({\alpha}+1)^2(\log p_2)^2}+O\of{\frac{p_2^{{\alpha}+1}}{(\log p_2)^3}}\right)\nonumber\\
		&=\frac2{{\alpha}+1}\frac{x}{\log^2 x}+\frac{4{\alpha}+8}{({\alpha}+1)^2}\frac{x}{\log^3 x}+O\of{\frac{x}{\log^4 x}}
		\end{align}
\quad For $I_2$, notice that $\log \frac{x}{p_2}\asymp \log x$ for $\sqrt{x}<p_2\leq \sqrt{x}(\log x)^{\frac4\alpha}$. So by Lemma \ref{mainlem} again, 
		\begin{align}\label{i2}
		I_2
			&=\sum_{\sqrt{x}<p_2\leq \sqrt{x}(\log x)^{\frac4\alpha}}\frac1{p_2^{\alpha}}\left(\frac{\of{\frac{x}{p_2}}^{{\alpha}+1}}{({\alpha}+1)\log \frac{x}{p_2}}+\frac{\of{\frac{x}{p_2}}^{{\alpha}+1}}{({\alpha}+1)^2(\log\frac{x}{p_2})^2}\left(1+O\of{\frac1{\log x}}\right)\right)
	\end{align}	
By Lemma \ref{mainlem} and using substitution of variables, we can get 
	\begin{align*}
	\sum_{\sqrt{x}<p_2\leq \sqrt{x}(\log x)^{\frac4\alpha}}\frac1{p_2^{2\alpha+1}\log\frac{x}{p_2}}
	&=\int_{\sqrt{x}}^{\sqrt{x}(\log x)^{\frac4\alpha}}\frac{dt}{t^{2\alpha+1}\log\frac{x}{t}\log t}+O\of{\frac1{x^\alpha(\log x)^4}}\\
	&=\frac1{x^\alpha}\int_1^{(\log x)^{\frac4\alpha}}\frac{dt}{t^{2\alpha+1}\log\frac{\sqrt x}{t}\log (\sqrt{x}t)}+O\of{\frac1{x^\alpha(\log x)^4}}.
	\end{align*}
For the integral, we use integration by parts to get
	\begin{align*}
	&\quad\int_1^{(\log x)^{\frac4\alpha}}\frac{dt}{t^{2\alpha+1}\log\frac{\sqrt x}{t}\log (\sqrt{x}t)}\\
	&=\left[-\frac1{2\alpha}\frac1{t^{2\alpha}\log\frac{\sqrt x}{t}\log (\sqrt{x}t)}\right]_1^{(\log x)^{\frac4\alpha}}+\frac1{\alpha}\int_1^{(\log x)^{\frac4\alpha}}\frac{\log t}{t^{2\alpha+1}\log^2\frac{\sqrt x}{t}\log^2 (\sqrt{x}t)}dt\\
	&=\frac2\alpha\frac1{\log^2x}+O\of{\frac1{(\log x)^4}}.
	\end{align*}
	So
	\begin{equation}\label{i2-1}
\sum_{\sqrt{x}<p_2\leq \sqrt{x}(\log x)^{\frac4\alpha}}\frac1{p_2^{2\alpha+1}\log\frac{x}{p_2}}	=\frac2\alpha\frac1{x^\alpha\log^2x}+O\of{\frac1{x^\alpha(\log x)^4}}.
	\end{equation}
	Similarly,  we have
\begin{align}\label{i2-2}
\sum_{\sqrt{x}<p_2\leq \sqrt{x}(\log x)^{\frac4\alpha}}\frac1{p_2^{2\alpha+1}(\log\frac{x}{p_2})^2}&=\frac4\alpha\frac1{x^\alpha\log^3x}+O\of{\frac{1}{x^\alpha(\log x)^4}}
\end{align}
Plugging equations (\ref{i2-1}) and (\ref{i2-2}) into equation (\ref{i2}), we get
	\begin{equation}
	I_2=\frac2{{\alpha}({\alpha}+1)}\frac{x}{\log^2 x}+\frac{4}{{\alpha}({\alpha}+1)^2}\frac{x}{\log^3 x}+O\of{\frac{x}{(\log x)^4}}
	\end{equation}		
\quad Therefore, adding estimates for $I_1$ and $I_2$, we get
\begin{equation}
\Sigma^{(2)}=\frac2\alpha\frac{x}{\log^2 x}+\frac4\alpha\frac{x}{\log^3 x}+O\of{\frac{x}{\log^4 x}}.
\end{equation}

\subsection{Computation of \texorpdfstring{$\Sigma^{(3)}$}{}}
For $\Sigma^{(3)}$, similar to the $\Sigma^{(3)}$ in \cite{jia87}, we have	
	\begin{align}
		\Sigma^{(3)}&=\sum_{p_3\leq x^{\frac13}}\frac1{p_3^{\alpha}}\sum_{p_2<p_3}\sum_{p_1<p_2}p_1^{\alpha}+\sum_{x^{\frac13}<p_3\leq x^{\frac13}(\log x)^{\frac4\alpha}}\frac1{p_3^{\alpha}}\sum_{p_2\leq\sqrt{\frac{x}{p_3}}}\sum_{p_1<p_2}p_1^{\alpha}+ \nonumber\\
	&	\sum_{x^{\frac13}<p_3\leq x^{\frac13}(\log x)^{\frac4\alpha}}\frac1{p_3^{\alpha}}\sum_{\sqrt{\frac{x}{p_3}}<p_2<p_3}\sum_{p_1\leq\frac{x}{p_2p_3}}p_1^{\alpha}+O\of{\frac{x}{\log^4 x}} \nonumber\\
	&=I_3+I_4+I_5+O\of{\frac{x}{\log^4 x}}
	\end{align}
\quad For $I_3$, similar to the computation of $I_1$, we have
\begin{align}
I_3&=\sum_{p_3\leq x^{\frac13}}\frac1{p_3^{\alpha}}\sum_{p_2<p_3}\of{\frac1{{\alpha}+1}\frac{p_2^{{\alpha}+1}}{\log p_2}+O\of{\frac{p_2^{{\alpha}+1}}{\log^2 p_2}}} \nonumber\\
&=\sum_{p_3\leq x^{\frac13}}\frac1{p_3^{\alpha}}\of{\frac1{({\alpha}+1)({\alpha}+2)}\frac{p_3^{{\alpha}+2}}{\log^2 p_3}+O\of{\frac{p_3^{{\alpha}+2}}{\log^3 p_3}}} \nonumber\\
&=\frac9{({\alpha}+1)({\alpha}+2)}\frac{x}{\log^3 x}+O\of{\frac{x}{\log^4 x}}.
\end{align}
\quad For $I_4$, similar to the computation of $I_2$, we have
\begin{align}
I_4&=\sum_{x^{\frac13}<p_3\leq x^{\frac13}\log^4x}\frac1{p_3^{\alpha}}\left(\frac1{({\alpha}+1)({\alpha}+2)}\frac{\of{\sqrt{\frac{x}{p_3}}}^{{\alpha}+2}}{\log^2\sqrt{\frac{x}{p_3}}}\of{1+O\of{\frac1{\log x}}}\right) \nonumber\\
&=\frac{18}{{\alpha}({\alpha}+1)({\alpha}+2)}\frac{x}{\log^3 x}+O\of{\frac{x}{\log^4 x}}.
\end{align}
\quad For $I_5$, first we have
\begin{align}
I_5&=\sum_{x^{\frac13}<p_3\leq x^{\frac13}(\log x)^{\frac4\alpha}}\frac1{p_3^{\alpha}}\sum_{\sqrt{\frac{x}{p_3}}<p_2<p_3}\left( \frac1{{\alpha}+1}\frac{\of{\frac{x}{p_2p_3}}^{{\alpha}+1}}{\log \frac{x}{p_2p_3}}\of{1+O\of{\frac1{\log x}}}\right) \label{i5-1}
\end{align}
Then by Lemma \ref{mainlem}, 
\begin{align}
&\quad\sum_{\sqrt{\frac{x}{p_3}}<p_2<p_3}\frac1{p_2^{\alpha+1}\log \frac{x}{p_2p_3}}\nonumber\\
&=\int_{\sqrt{\frac{x}{p_3}}}^{p_3}\frac{dt}{t^{\alpha+1}\log \frac{x}{tp_3}\log t}+O\of{\frac1{(\sqrt{\frac{x}{p_3}})^\alpha(\log \sqrt{\frac{x}{p_3}})^4}}\nonumber\\
&=\left[-\frac1\alpha\frac1{t^{\alpha}\log \frac{x}{tp_3}\log t}\right]_{\sqrt{\frac{x}{p_3}}}^{p_3}+\frac1\alpha\int_{\sqrt{\frac{x}{p_3}}}^{p_3}\frac{\log \frac{t^2p_3}{x}}{t^{\alpha+1}\log^2 \frac{x}{tp_3}\log^2 t}dt+O\of{\frac1{(\sqrt{\frac{x}{p_3}})^\alpha(\log {\frac{x}{p_3}})^4}}\nonumber\\
&=\frac4\alpha\frac1{(\sqrt{\frac{x}{p_3}})^\alpha\log^2\frac{x}{p_3}}-\frac1\alpha\frac1{p_3^\alpha\log\frac{x}{p_2^2}\log p_3}+O\of{\frac1{(\sqrt{\frac{x}{p_3}})^\alpha(\log{\frac{x}{p_3}})^4}}\label{i5-2}
\end{align}
Notice that $\log{\frac{x}{p_3}}\asymp \log x$ for $x^{\frac13}<p_3\leq x^{\frac13}(\log x)^{\frac4\alpha}$. Plugging (\ref{i5-2}) into (\ref{i5-1}) we get
\begin{align}\label{i5}
I_5&=\frac{4x^{\frac{{\alpha}}2+1}}{{\alpha}({\alpha}+1)}\sum_{x^{\frac13}<p_3\leq x^{\frac13}(\log x)^{\frac4\alpha}}\frac1{p_3^{\frac{3{\alpha}}2+1}\log^2\frac{x}{p_3}}\of{1+O\of{\frac1{\log x}}} \nonumber\\
&\qquad-\frac{x^{{\alpha}+1}}{{\alpha}({\alpha}+1)}\sum_{x^{\frac13}<p_3\leq x^{\frac13}(\log x)^{\frac4\alpha}}\frac1{p_3^{3{\alpha}+1}\log\frac{x}{p_3^2}\log p_3} 
\end{align}
Now, similar to equation (\ref{i2-1}), we have
\begin{align}
\sum_{x^{\frac13}<p_3\leq x^{\frac13}(\log x)^{\frac4\alpha}}\frac1{p_3^{\frac{3{\alpha}}2+1}\log^2\frac{x}{p_3}}&=\int_{x^{\frac13}}^{x^{\frac13}(\log x)^{\frac4\alpha}}\frac{dt}{t^{\frac{3{\alpha}}2+1}\log^2\frac{x}t\log t}+O\of{\frac1{x^{\frac\alpha2}\log^4x}}\nonumber\\
&=\frac9{2\alpha}\frac{1}{x^{\frac\alpha2}\log^3x}+O\of{\frac1{x^{\frac\alpha2}\log^4x}}
\end{align}
and
\begin{align}
\sum_{x^{\frac13}<p_3\leq x^{\frac13}(\log x)^{\frac4\alpha}}\frac1{p_3^{3{\alpha}+1}\log\frac{x}{p_3^2}\log p_3}&=\int_{x^{\frac13}}^{x^{\frac13}(\log x)^{\frac4\alpha}}\frac{dt}{t^{3\alpha+1}\log\frac{x}{t^2}\log^2t}+O\of{\frac1{x^\alpha \log^4x}}\nonumber\\
&=\frac9\alpha\frac1{x^\alpha \log^3x}+O\of{\frac1{x^\alpha \log^4x}}.
\end{align}
Plugging  them into equation (\ref{i5}), we can get
\begin{equation}
I_5=\frac9{{\alpha}^2({\alpha}+1)}\frac{x}{\log^3 x}+O\of{\frac{x}{\log^4 x}}.
\end{equation}
\quad Therefore, 
\begin{equation}
\Sigma^{(3)}=\frac9{{\alpha}^2}\frac{x}{\log^3 x}+O\of{\frac{x}{\log^4 x}}.
\end{equation}

\subsection{Computation of the remainder}
Similar to the proof for the estimates of $\Sigma^{(4)}$ in \cite{jia87}, one can show that
\begin{align}
	\Sigma^{(4)}=O(\frac{x}{\log^4 x}),\\
	\Sigma^{(5)}=O(\frac{x}{\log^5 x}).
\end{align}
Notice that for $P(n)<p(n)\log^{\frac4\alpha} x$, we have
$$\left(\frac{p(n)}{P(n)}\right)^\alpha\leq (\log x)^{\frac4\alpha-4} \cdot \frac{p(n)}{P(n)}.$$
By Lemma \ref{sum1lem} and Lemma \ref{smoothnum}, we have
\begin{align*}
		\Sigma^{(6)}+	\Sigma^{(7)}+\cdots&=\sum_{2\leq n\leq x, \omega(n)\geq 6,\mu(n)\neq0}\left(\frac{p(n)}{P(n)}\right)^\alpha\\
		&=O\of{\sum_{\smat{2\leq n\leq x,
			P(n)>y\\
		P(n)<p(n)\log^{\frac4\alpha} x\\
	\omega(n)\geq 6,\mu(n)\neq0}}\left(\frac{p(n)}{P(n)}\right)^\alpha}+O\of{\frac{x}{\log^4x}}\\
&=O\of{(\log x)^{\frac4\alpha-4} \sum_{\smat{2\leq n\leq x,
			P(n)>y\\
			P(n)<p(n)\log^{\frac4\alpha} x\\
			\omega(n)\geq 6,\mu(n)\neq0}}\frac{p(n)}{P(n)}}+O\of{\frac{x}{\log^4x}}\\
		&=O\of{(\log x)^{\frac4\alpha-4}\sum_{p_6\log^{-\frac4\alpha} x<p_1<\cdots<p_6\leq x,\,p_6>y}\frac{x}{p_1\cdots p_6}}+O\of{\frac{x}{\log^4x}}\\
		&=O\of{x(\log x)^{\frac4\alpha-4}\sum_{y<p_6\leq x}\frac1{p_6}\left(\sum_{p_6\log^{-\frac4\alpha} x<p<p_6}\frac1p\right)^5}+O\of{\frac{x}{\log^4x}}\\
		&=O\of{x(\log x)^{\frac4\alpha-4}\sum_{y<p_6\leq x}\frac1{p_6}\left(\frac{\log\log x}{\log p_6}\right)^5}+O\of{\frac{x}{\log^4x}}\\
		&=O\of{\frac{x(\log\log x)^{11}}{(\log x)^{9-\frac\alpha4}}}+O\of{\frac{x}{\log^4x}}.
\end{align*}
If $\alpha>\frac45$, then 
\begin{equation}
	\Sigma^{(6)}+	\Sigma^{(7)}+\cdots=O\of{\frac{x}{\log^4x}}.
\end{equation}
\textit{Proof of Theorem\ref{mainthm1}}. Theorem \ref{mainthm1} follows by
combining the results in sections 3.1-3.4 and taking $A=4$ in Lemma \ref{mainlem2}.

\section{An application}
\quad Let $k\geq1$ be an integer. Taking $\lambda\equiv1$ and $\alpha=k$, we can get an estimation of $S_k(x)=\sum\limits_{n\leq x}\left(\frac{p(n)}{P(n)}\right)^k$ by Theorem \ref{mainthm1}. Suppose $f$ is a real analytic function with convergence radius greater than 1 and $f(0)=0$, then $f(x)=\sum\limits_{i=1}^\infty a_ix^i$. Let $S_f(x)=\sum\limits_{n\leq x}f\of{\frac{p(n)}{P(n)}}$. Then $S_f(x)=\sum\limits_{i=1}^\infty a_iS_i(x)$. Thus, by Theorem \ref{mainthm1} we can get another generalization of $S(x)$.

\begin{thm}\label{mainthm2}
	Suppose $f$ is a real analytic function with convergence radius greater than 1 and $f(0)=0$. Let $S_f(x)=\sum\limits_{n\leq x}f\of{\frac{p(n)}{P(n)}}$, then 
	\begin{equation*}\label{maththm2eq}
	\begin{split}
	S_f(x)&=f(1)\frac{x}{\log x}+\left(2\int_0^1\frac{f(t)}{t}dt+f(1)\right)\frac{x}{\log^2 x}\\
	&\qquad+\left(9\int_0^1\int_0^s\frac{f(t)}{st}dtds+4\int_0^1\frac{f(t)}{t}dt+2f(1)\right)\frac{x}{\log^3 x}+O\of{\frac{x}{(\log x)^3}}.
	\end{split}	
	\end{equation*}
\end{thm}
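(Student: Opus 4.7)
The plan is to expand $f$ as a power series and apply Theorem \ref{mainthm1} with $\lambda\equiv 1$ and $\alpha=i$ to each monomial. Writing $f(t)=\sum_{i=1}^\infty a_it^i$ (the constant term vanishes since $f(0)=0$) and using that the convergence radius $\rho>1$ gives $|a_i|\leq C\rho^{-i}$ for some constant $C$, the identity
\[
S_f(x)=\sum_{i=1}^\infty a_iS_i(x)
\]
holds by absolute convergence since $|S_i(x)|\leq x$. Truncating at $N=\lfloor\log x\rfloor$, the tail $\sum_{i>N}a_iS_i(x)$ is bounded by $Cx\sum_{i>N}\rho^{-i}=O\of{x\rho^{-N}}$, which is negligible against any negative power of $\log x$.

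For each $1\leq i\leq N$, Theorem \ref{mainthm1} with $\alpha=i\geq 1$ gives
\[
S_i(x)=\frac{x}{\log x}+\of{\frac{2}{i}+1}\frac{x}{\log^2 x}+\of{\frac{9}{i^2}+\frac{4}{i}+2}\frac{x}{\log^3 x}+O\of{\frac{x}{(\log x)^4}}.
\]
Summing $a_i$ against this expansion requires the three identities
\[
\sum_{i=1}^\infty a_i=f(1),\qquad \sum_{i=1}^\infty\frac{a_i}{i}=\int_0^1\frac{f(t)}{t}\,dt,\qquad \sum_{i=1}^\infty\frac{a_i}{i^2}=\int_0^1\int_0^s\frac{f(t)}{st}\,dt\,ds,
\]
which follow by interchanging sum and integral on $[0,1]$ (justified by uniform convergence) and iterating the first identity. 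Collecting the coefficients of $\frac{x}{\log x}$, $\frac{x}{\log^2 x}$, and $\frac{x}{\log^3 x}$ then reproduces exactly the main terms claimed in the theorem.

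The main obstacle is to bound the aggregated error by $O\of{x/(\log x)^3}$. Inspection of the proof of Theorem \ref{mainthm1} shows that the implicit constants in the $O\of{x/(\log x)^4}$ error for each $S_i(x)$ depend at most polynomially on $1/i$, hence are uniformly bounded for $i\geq 1$; consequently $\sum_{i=1}^N|a_i|\cdot O\of{x/(\log x)^4}=O\of{x/(\log x)^4}$ since $\sum|a_i|<\infty$. The stated error $O\of{x/(\log x)^3}$ comfortably absorbs this contribution together with the truncation discrepancies between $\sum_{i\leq N}a_i/i^k$ and $\sum_{i=1}^\infty a_i/i^k$ for $k=0,1,2$, each of which is of order $\rho^{-N}$.
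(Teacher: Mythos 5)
Your proposal follows exactly the paper's route: expand $f(t)=\sum_{i\geq 1}a_it^i$, write $S_f(x)=\sum_{i\geq 1}a_iS_i(x)$, and apply Theorem \ref{mainthm1} with $\lambda\equiv1$ and $\alpha=i$, assembling the coefficients via $\sum_i a_i=f(1)$, $\sum_i a_i/i=\int_0^1 f(t)t^{-1}\,dt$ and $\sum_i a_i/i^2=\int_0^1\int_0^s f(t)(st)^{-1}\,dt\,ds$. You in fact supply more justification than the paper does (the truncation at $N=\lfloor\log x\rfloor$, the interchange of summations, and the uniformity in $i$ of the implicit constants, all of which the paper leaves implicit), and your $O\left(x/(\log x)^4\right)$ aggregate error comfortably implies the stated $O\left(x/(\log x)^3\right)$.
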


	\textbf{Acknowledgments}. The author would like to thank his advisor Prof. Xiaoqing Li for her useful suggestions and  would also like to thank Jiseong Kim for reading this note.

\begin{bibdiv}
	\begin{biblist}
\bib{jia87}{article}{
	author={Jia, Chao Hua},
	title={A generalization of a theorem on prime numbers},
	language={Chinese, with English summary},
	journal={Adv. in Math. (Beijing)},
	volume={16},
	date={1987},
	number={4},
	pages={419--426},
	issn={1000-0917},
	review={\MR{915866}},
}
\bib{erdos82}{article}{
	author={Erd\"{o}s, P.},
	author={van Lint, J. H.},
	title={On the average ratio of the smallest and largest prime divisor of
		$n$},
	journal={Nederl. Akad. Wetensch. Indag. Math.},
	volume={44},
	date={1982},
	number={2},
	pages={127--132},
	issn={0019-3577},
	review={\MR{662646}},
}
\bib{bom62}{article}{
	author={Bombieri, Enrico},
	title={Sulle formule di A. Selberg generalizzate per classi di funzioni
		aritmetiche e le applicazioni al problema del resto nel ``Primzahlsatz''},
	language={Italian, with English summary},
	journal={Riv. Mat. Univ. Parma (2)},
	volume={3},
	date={1962},
	pages={393--440},
	review={\MR{0154860}},
}
	\end{biblist}
\end{bibdiv}

Department of Mathematics, University at Buffalo\\
\textit{Email}: \href{mailto: bwang32@buffalo.edu}{bwang32@buffalo.edu}\\

\end{document}